\newtheorem{theorem}{Theorem}
\theoremstyle{plain}
\newtheorem{corollary}{Corollary}
\newtheorem{definition}{Definition}
\newtheorem{lemma}{Lemma}
\newtheorem{remark}{Remark}
\numberwithin{equation}{section}
\begin{document}
\title{INEQUALITIES ON GEOMETRICALLY CONVEX FUNCTIONS}
\author{M.EM\.{I}N \"{O}ZDEM\.{I}R}
\address{ATAT\"{U}RK UNIVERSITY, K. K. EDUCATION FACULTY, DEPARTMENT OF
MATHEMATICS, 25240, CAMPUS, ERZURUM, TURKEY}
\email{emos@atauni.edu.tr}
\subjclass[2000]{Primary 26D15; Secondary 26A51.}
\keywords{Hermite-Hadamard inequality, Geometrically convex function.}

\begin{abstract}
In this paper, we obtain some new upper bounds for differantiable mappings
whose q-th powers are geometrically convex and monotonically decreasing by
using the H\"{o}lder inequality, Power mean inequality and properties of
modulus. 
\end{abstract}

\maketitle

\section{INTRODUCTION}

The following double inequality is well known in the literature as
Hadamard's inequality:

Let $f:I\subseteq 
%TCIMACRO{\U{211d} }%
%BeginExpansion
\mathbb{R}
%EndExpansion
\rightarrow 
%TCIMACRO{\U{211d} }%
%BeginExpansion
\mathbb{R}
%EndExpansion
$ be a convex function defined on an interval $I$ of real numbers, $a,b\in I$
and $a<b,$ we have%
\begin{equation}
f\left( \frac{a+b}{2}\right) \leq \frac{1}{b-a}\dint\limits_{a}^{b}f(x)dx%
\leq \frac{f(a)+f(b)}{2}.  \label{222}
\end{equation}%
Both inequalities hold in the reversed direction if $f$ is concave.

It was first discovered by Hermite in 1881 in the Journal Mathesis (see \cite%
{herm}). The inequality (\ref{222}) was nowhere mentioned in the
mathematical literature until 1893. Beckenbach, a leading expert on the
theory of convex functions, wrote that inequality (\ref{222}) was proven by
Hadamard in 1893 (see \cite{beck}). In 1974 Mitrinovi\v{c} found
HermiteHermite's note in Mathesis. That is why, the inequality (\ref{222})
was known as Hermite-Hadamard inequality.

A function $f:[a,b]\subset 
%TCIMACRO{\U{211d} }%
%BeginExpansion
\mathbb{R}
%EndExpansion
\rightarrow 
%TCIMACRO{\U{211d} }%
%BeginExpansion
\mathbb{R}
%EndExpansion
$ is said to be convex if whenever $x,y\in \lbrack a,b]$ and $t\in \lbrack
0,1]$, the following inequality holds:%
\begin{equation*}
f(tx+(1-t)y)\leq tf(x)+(1-t)f(y).
\end{equation*}%
We say that $f$ is concave if ($-f$) is convex. This definition has its
origins in Jensen's results from \cite{d} and has opened up the most
extended, useful and multi-disciplinary domain of mathematics, namely,
convex analysis. Convex curves and convex bodies have appeared in
mathematical literature since antiquity and there are many important results
related to them.

In \cite{b}, the concept of geometrically convex functions were introduced
as following:

\begin{definition}
A function $f:I\subset 
%TCIMACRO{\U{211d} }%
%BeginExpansion
\mathbb{R}
%EndExpansion
_{+}\rightarrow 
%TCIMACRO{\U{211d} }%
%BeginExpansion
\mathbb{R}
%EndExpansion
_{+}$ is said to be a geometrically convex function if%
\begin{equation*}
f\left( x^{t}y^{1-t}\right) \leq \left[ f(x)\right] ^{t}\left[ f(y)\right]
^{1-t}
\end{equation*}%
for all $x,y\in I$ and $t\in \lbrack 0,1]$.
\end{definition}

For some recent results connected with geometrically convex functions, see 
\cite{b1}-\cite{b}.

\begin{definition}
Let $a,b\in 
%TCIMACRO{\U{211d} }%
%BeginExpansion
\mathbb{R}
%EndExpansion
,$ $a,b\neq 0$ and $\left\vert a\right\vert \neq \left\vert b\right\vert .$
Logarithmic mean for real numbers was inroduced as follows:%
\begin{equation*}
L(a,b)=\frac{a-b}{\ln \left\vert a\right\vert -\ln \left\vert b\right\vert }.
\end{equation*}
\end{definition}

In \cite{zdemir}, \"{O}zdemir and Y\i ld\i z established the following
Theorem:

\begin{theorem}
Let $f:I^{\circ }\subset 
%TCIMACRO{\U{211d} }%
%BeginExpansion
\mathbb{R}
%EndExpansion
_{+}\rightarrow 
%TCIMACRO{\U{211d} }%
%BeginExpansion
\mathbb{R}
%EndExpansion
_{+}$ be differentiable function on $I^{\circ }$, $a,b\in I$ with $a<b$ and $%
f^{\prime }\in L_{1}[a,b].$ If $\left\vert f^{\prime }\right\vert ^{q}$ is
geometrically convex and monotonically decreasing on $[a,b]$ and $t\in
\lbrack 0,1],$ then we have the following inequality:%
\begin{eqnarray}
&&\left\vert f(x)-\frac{1}{b-a}\int_{a}^{b}f(u)du\right\vert  \label{9} \\
&\leq &\frac{1}{(p+1)^{\frac{1}{p}}}\left\{ \frac{(x-a)^{2}}{b-a}\left[
L\left( \left\vert f^{\prime }(x)\right\vert ^{q},\left\vert f^{\prime
}(a)\right\vert ^{q}\right) \right] ^{\frac{1}{q}}+\frac{(b-x)^{2}}{b-a}%
\left[ L\left( \left\vert f^{\prime }(x)\right\vert ^{q},\left\vert
f^{\prime }(b)\right\vert ^{q}\right) \right] ^{\frac{1}{q}}\right\}  \notag
\end{eqnarray}%
where $\frac{1}{p}+\frac{1}{q}=1.$ $L($ $,$ $)$ is Logarithmic mean for real
numbers.
\end{theorem}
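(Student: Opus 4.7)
The plan is to start from the standard integral identity
\begin{equation*}
f(x) - \frac{1}{b-a}\int_a^b f(u)\,du = \frac{(x-a)^2}{b-a}\int_0^1 t\,f'(tx + (1-t)a)\,dt - \frac{(b-x)^2}{b-a}\int_0^1 t\,f'(tx + (1-t)b)\,dt,
\end{equation*}
which I would verify by splitting $\int_a^b f$ at $x$, integrating $\int_a^x (u-a)f'(u)\,du$ and $\int_x^b (u-b)f'(u)\,du$ by parts, and then making the affine substitutions $u = (1-t)a + tx$ and $u = (1-t)b + tx$ respectively.

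Next I would take absolute values, apply the triangle inequality, and invoke H\"older's inequality with conjugate exponents $p$ and $q$ on each of the two integrals, splitting $t\cdot|f'(\cdot)|$ as $t$ times $|f'(\cdot)|$. Since $\int_0^1 t^p\,dt = 1/(p+1)$, this factors out $(p+1)^{-1/p}$ from each term and leaves me having to control $\int_0^1 |f'(tx + (1-t)a)|^q\,dt$ and $\int_0^1 |f'(tx + (1-t)b)|^q\,dt$.

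The key step, and the one I expect to be the main obstacle, is bridging the arithmetic combination inside $f'$ to a geometric combination so that geometric convexity may be applied. For positive $x, a$ the AM--GM inequality gives $tx + (1-t)a \geq x^t a^{1-t}$, and since $|f'|^q$ is monotonically decreasing this yields
\begin{equation*}
|f'(tx + (1-t)a)|^q \leq |f'(x^t a^{1-t})|^q.
\end{equation*}
Geometric convexity of $|f'|^q$ then upgrades this to $|f'(x^t a^{1-t})|^q \leq |f'(x)|^{qt}|f'(a)|^{q(1-t)}$. It is precisely the \emph{decreasing} monotonicity hypothesis that makes this arithmetic-to-geometric passage go in the useful direction; without it, the two applications would pull in opposite senses.

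Finally, a short computation (write the integrand as $|f'(a)|^q \bigl(|f'(x)|^q/|f'(a)|^q\bigr)^t$ and integrate against $dt$ on $[0,1]$) shows
\begin{equation*}
\int_0^1 |f'(x)|^{qt}|f'(a)|^{q(1-t)}\,dt = L\!\left(|f'(x)|^q,\,|f'(a)|^q\right),
\end{equation*}
and the same argument with $b$ in place of $a$ handles the other integral. Raising to the power $1/q$ and combining the two bounds through the triangle inequality then yields (\ref{9}).
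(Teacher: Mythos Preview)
Your proof is correct. The Montgomery identity you state is right (and your sketch of its derivation via integration by parts and affine substitution is standard), the H\"older step with $\int_0^1 t^p\,dt = 1/(p+1)$ is clean, and the chain AM--GM $\Rightarrow$ monotone decreasing $\Rightarrow$ geometric convexity $\Rightarrow$ logarithmic-mean evaluation is exactly the mechanism the paper relies on throughout.

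As for comparison: the paper does not give a standalone proof of this statement; it is quoted from \cite{zdemir}, and the only in-paper justification is the Remark after Theorem~\ref{cett}, which says that if one takes $f(x)=\frac{(b-x)f(b)+(x-a)f(a)}{b-a}$ in Theorem~\ref{cett} one recovers~(\ref{9}). That remark, read literally, only handles the special case where $f(x)$ coincides with the linear interpolant at $x$, so it is not a full derivation. Your direct argument via the Montgomery identity is the natural one, and it is step-for-step the same strategy the paper uses to prove Theorem~\ref{cett} itself (Lemma~\ref{7} plus H\"older plus the AM--GM/monotonicity/geometric-convexity chain), merely started from the kernel $t$ rather than $1-t$. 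So methodologically you are aligned with the paper; you have simply supplied the complete argument that the paper defers to \cite{zdemir}.
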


In order to prove our main results we need the following lemma (see \cite{a}%
).

\begin{lemma}
\label{1}Let $f:I\subset 
%TCIMACRO{\U{211d} }%
%BeginExpansion
\mathbb{R}
%EndExpansion
\rightarrow 
%TCIMACRO{\U{211d} }%
%BeginExpansion
\mathbb{R}
%EndExpansion
$ be a differentiable mapping on $I^{\circ }$ where $a,b\in I$ with $a<b$.
If $f^{\prime \prime }\in L_{1}[a,b],$ then the following equality holds:%
\begin{eqnarray*}
&&\frac{1}{b-a}\int_{a}^{b}f(u)du-f(x)+\left( x-\frac{a+b}{2}\right)
f^{\prime }(x) \\
&=&\frac{(x-a)^{3}}{2(b-a)}\int_{0}^{1}t^{2}f^{\prime \prime }(tx+(1-t)a)dt+%
\frac{(b-x)^{3}}{2(b-a)}\int_{0}^{1}t^{2}f^{\prime \prime }(tx+(1-t)b)dt
\end{eqnarray*}%
for each $x\in \lbrack a,b].$
\end{lemma}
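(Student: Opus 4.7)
The plan is to start from the right-hand side and reduce it to the left-hand side by performing integration by parts twice on each of the two integrals, then combining the resulting boundary terms.

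First I would focus on $J_{1}:=\int_{0}^{1}t^{2}f''(tx+(1-t)a)\,dt$. Taking $u=t^{2}$ and $dv=f''(tx+(1-t)a)\,dt$, a first integration by parts produces
\[
J_{1}=\frac{f'(x)}{x-a}-\frac{2}{x-a}\int_{0}^{1}tf'(tx+(1-t)a)\,dt.
\]
A second integration by parts with $u=t$, $dv=f'(tx+(1-t)a)\,dt$ followed by the substitution $s=tx+(1-t)a$ (so $ds=(x-a)\,dt$) yields
\[
\int_{0}^{1}tf'(tx+(1-t)a)\,dt=\frac{f(x)}{x-a}-\frac{1}{(x-a)^{2}}\int_{a}^{x}f(s)\,ds,
\]
hence after multiplication by $\tfrac{(x-a)^{3}}{2(b-a)}$ one gets
\[
\frac{(x-a)^{3}}{2(b-a)}J_{1}=\frac{(x-a)^{2}f'(x)}{2(b-a)}-\frac{(x-a)f(x)}{b-a}+\frac{1}{b-a}\int_{a}^{x}f(s)\,ds.
\]

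Next I would repeat exactly the same procedure for $J_{2}:=\int_{0}^{1}t^{2}f''(tx+(1-t)b)\,dt$, being careful that now the inner derivative factor is $(x-b)=-(b-x)$, which introduces a sign that will combine with the cube $(b-x)^{3}$ in front. The analogous computation gives
\[
\frac{(b-x)^{3}}{2(b-a)}J_{2}=-\frac{(b-x)^{2}f'(x)}{2(b-a)}-\frac{(b-x)f(x)}{b-a}+\frac{1}{b-a}\int_{x}^{b}f(s)\,ds.
\]

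Finally I would add the two expressions and simplify. The two definite integrals combine into $\tfrac{1}{b-a}\int_{a}^{b}f(s)\,ds$, the $f(x)$ coefficients combine via $(x-a)+(b-x)=b-a$ to give $-f(x)$, and the $f'(x)$ coefficients combine via the factorization $(x-a)^{2}-(b-x)^{2}=(b-a)(2x-a-b)$ to give $\bigl(x-\tfrac{a+b}{2}\bigr)f'(x)$. This yields exactly the left-hand side of the claimed identity.

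The computation is essentially mechanical and the only place where I expect to have to be careful is the sign bookkeeping for $J_{2}$: one must track that the substitution $s=tx+(1-t)b$ reverses orientation (since $t=0$ corresponds to $s=b$ and $t=1$ to $s=x$), and combine this consistently with the $(b-x)^{3}$ prefactor and the powers of $(x-b)$ appearing in the denominators.
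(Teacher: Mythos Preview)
Your argument is correct: the two successive integrations by parts on each of $J_{1}$ and $J_{2}$, followed by the substitution $s=tx+(1-t)a$ (resp.\ $s=tx+(1-t)b$) and the algebraic recombination of boundary terms, are all accurate, including the sign tracking for $J_{2}$ and the factorization $(x-a)^{2}-(b-x)^{2}=(b-a)(2x-a-b)$.

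Note, however, that the paper does not give its own proof of this lemma; it merely quotes the identity from \cite{a} (Set, Sar{\i}kaya and \"{O}zdemir). Your double integration-by-parts computation is exactly the standard route by which such representation formulas are established in that literature, so there is nothing to compare: you have supplied the missing (cited) proof rather than an alternative one.
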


In \cite{HME}, in order to prove some inequalities related to
Hermite-Hadamard inequality, Kavurmac\i\ \textit{et al}.\ used the following
lemma.

\begin{lemma}
\label{7}Let $f:I\subset 
%TCIMACRO{\U{211d} }%
%BeginExpansion
\mathbb{R}
%EndExpansion
\rightarrow 
%TCIMACRO{\U{211d} }%
%BeginExpansion
\mathbb{R}
%EndExpansion
$ be a differentiable mapping on $I^{o}$ where $a,b\in I$ with $a<b.$ If $%
f^{\prime }\in L[a,b],$ then the following equality holds:%
\begin{eqnarray*}
&&\frac{(b-x)f(b)+(x-a)f(a)}{b-a}-\frac{1}{b-a}\overset{b}{\underset{a}{\int 
}}f(u)du \\
&=&\frac{(x-a)^{2}}{b-a}\int_{0}^{1}(1-t)f^{\prime }(tx+(1-t)a)dt+\frac{%
(b-x)^{2}}{b-a}\int_{0}^{1}(t-1)f^{\prime }(tx+(1-t)b)dt.
\end{eqnarray*}
\end{lemma}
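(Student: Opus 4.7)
The plan is to show that the two integrals on the right-hand side can be collapsed, by integration by parts followed by a linear change of variables, into expressions involving $f(a)$, $f(b)$, and $\int_a^b f(u)\,du$. The guiding observation is that $\frac{d}{dt}f(tx+(1-t)a) = (x-a)f'(tx+(1-t)a)$, so an antiderivative of $f'(tx+(1-t)a)$ with respect to $t$ is available in closed form; the same remark applies to $f'(tx+(1-t)b)$ with $b$ in place of $a$. Thus both right-hand integrals are tailor-made for one step of integration by parts in $t$.

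For the first integral, I would choose $U=1-t$ and $dV=f'(tx+(1-t)a)\,dt$, so that $dU=-dt$ and $V=\frac{1}{x-a}f(tx+(1-t)a)$. The boundary term at $t=1$ vanishes because of the factor $1-t$, while at $t=0$ it produces a contribution proportional to $f(a)$. The remaining integral $\int_0^1 f(tx+(1-t)a)\,dt$ is then converted, via the substitution $u=tx+(1-t)a$, into $\frac{1}{x-a}\int_a^x f(u)\,du$. Multiplying by $\frac{(x-a)^2}{b-a}$ turns this piece into an expression of the form $\frac{1}{b-a}\int_a^x f(u)\,du$ plus a boundary term involving $(x-a)f(a)/(b-a)$.

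The treatment of the second integral is analogous, with $U=t-1$ and $V=\frac{1}{x-b}f(tx+(1-t)b)$; the boundary term at $t=0$ now survives and yields a contribution involving $f(b)$, and the substitution $u=tx+(1-t)b$ yields $\frac{1}{b-x}\int_x^b f(u)\,du$. After scaling by $\frac{(b-x)^2}{b-a}$, adding the two processed pieces merges $\int_a^x$ and $\int_x^b$ into $\int_a^b$, while the surviving boundary terms combine to give $\frac{(x-a)f(a)+(b-x)f(b)}{b-a}$, which is precisely the claimed identity. The only real obstacle is sign-bookkeeping: the factor $(t-1)$ in the second integral, together with $(x-b)<0$, makes it easy to slip a sign when reversing limits in the $u$-substitution or when evaluating the boundary terms. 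If those signs are tracked carefully through both pieces, the identity falls out directly.
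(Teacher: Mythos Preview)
Your plan---integration by parts in $t$ using $V=\frac{1}{x-a}f(tx+(1-t)a)$ (respectively $\frac{1}{x-b}f(tx+(1-t)b)$) followed by the linear substitution $u=tx+(1-t)a$ (respectively $u=tx+(1-t)b$)---is exactly the standard argument, and it is the one used in the source the paper cites; the present paper quotes the lemma without proof, so there is nothing further to compare.

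One caution: if you carry the bookkeeping through exactly as you describe, you will obtain
\[
\frac{(x-a)^2}{b-a}\int_0^1(1-t)f'(tx+(1-t)a)\,dt+\frac{(b-x)^2}{b-a}\int_0^1(t-1)f'(tx+(1-t)b)\,dt
=\frac{1}{b-a}\int_a^b f(u)\,du-\frac{(x-a)f(a)+(b-x)f(b)}{b-a},
\]
i.e.\ the \emph{negative} of the left-hand side as printed. A quick sanity check with $f(u)=u$, $a=0$, $b=1$ confirms this: the stated left side equals $\tfrac12-x$ while the stated right side equals $x-\tfrac12$. The factors $(1-t)$ and $(t-1)$ in the two integrals have evidently been swapped in the paper's transcription of the lemma. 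This is harmless for the paper's applications, which immediately pass to $|1-t|=|t-1|$, but your ``careful sign-tracking'' will expose the discrepancy rather than verify the identity verbatim, so do not be alarmed when it does.
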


The main aim of this paper is to establish new inequalities for
geometrically convex functions.

\section{MAIN RESULTS}

\begin{theorem}
\label{cet}Let $f:I^{\circ }\subset 
%TCIMACRO{\U{211d} }%
%BeginExpansion
\mathbb{R}
%EndExpansion
_{+}\rightarrow 
%TCIMACRO{\U{211d} }%
%BeginExpansion
\mathbb{R}
%EndExpansion
_{+}$ be differentiable function on $I^{\circ }$, $a,b\in I$ with $a<b$ and $%
f^{\prime \prime }\in L_{1}[a,b].$ If $\left\vert f^{\prime \prime
}\right\vert ^{q}$ is geometrically convex and monotonically decreasing on $%
[a,b]$ and $t\in \lbrack 0,1],$ then we have the following inequality:%
\begin{eqnarray}
&&  \label{2} \\
&&\left\vert \frac{1}{b-a}\int_{a}^{b}f(u)du-f(x)+\left( x-\frac{a+b}{2}%
\right) f^{\prime }(x)\right\vert  \notag \\
&\leq &\frac{1}{(2p+1)^{\frac{1}{p}}}\left\{ \frac{(x-a)^{3}}{2(b-a)}\left[
L\left( \left\vert f^{\prime \prime }(x)\right\vert ^{q},\left\vert
f^{\prime \prime }(a)\right\vert ^{q}\right) \right] ^{\frac{1}{q}}+\frac{%
(b-x)^{3}}{2(b-a)}\left[ L\left( \left\vert f^{\prime \prime }(x)\right\vert
^{q},\left\vert f^{\prime \prime }(b)\right\vert ^{q}\right) \right] ^{\frac{%
1}{q}}\right\}  \notag
\end{eqnarray}%
where $1<p<\infty ,$ $\frac{1}{p}+\frac{1}{q}=1.$ $L($ $,$ $)$ is
Logarithmic mean for real numbers.
\end{theorem}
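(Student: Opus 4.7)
The plan is to apply Lemma \ref{1} (the identity of K\i rmac\i-type with $f''$) and then estimate each of the two resulting integrals using Hölder's inequality combined with the geometric convexity and monotonicity hypotheses.

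First, I would take absolute values in the equality of Lemma \ref{1} and apply the triangle inequality, reducing the problem to bounding
\[
\int_{0}^{1} t^{2} \left| f''(tx+(1-t)a)\right| dt \quad \text{and} \quad \int_{0}^{1} t^{2} \left| f''(tx+(1-t)b)\right| dt.
\]
On each of these I would apply Hölder's inequality with exponents $p$ and $q$ to separate the polynomial weight from the derivative term, obtaining a factor $\left(\int_0^1 t^{2p}\,dt\right)^{1/p} = \frac{1}{(2p+1)^{1/p}}$, which explains the constant in front of the right-hand side of \eqref{2}.

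The main step, and the conceptual heart of the argument, is to bound $\int_0^1 |f''(tx+(1-t)a)|^q\,dt$ in terms of the values of $|f''|^q$ at the endpoints. Here I would use two observations together: by the AM--GM inequality, $tx+(1-t)a \geq x^{t}a^{1-t}$ for $t\in[0,1]$ and positive $x,a$; and by hypothesis $|f''|^q$ is \emph{monotonically decreasing}, so
\[
\left| f''(tx+(1-t)a)\right|^{q} \leq \left| f''(x^{t}a^{1-t})\right|^{q}.
\]
Since $|f''|^q$ is geometrically convex, the right-hand side is in turn at most $|f'(x)|^{qt}|f''(a)|^{q(1-t)}$, and integrating the resulting exponential function of $t$ produces the logarithmic mean:
\[
\int_0^1 |f''(x)|^{qt}|f''(a)|^{q(1-t)}\,dt = \frac{|f''(x)|^{q}-|f''(a)|^{q}}{\ln|f''(x)|^{q}-\ln|f''(a)|^{q}} = L\!\left(|f''(x)|^{q},|f''(a)|^{q}\right).
\]
The same reasoning applied to the $[x,b]$-integral yields $L(|f''(x)|^q,|f''(b)|^q)$. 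Taking $q$-th roots and inserting both estimates back into Lemma \ref{1} gives exactly \eqref{2}.

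The only genuine obstacle is the bridge between the arithmetic argument $tx+(1-t)a$ appearing in Lemma \ref{1} and the geometric argument $x^t a^{1-t}$ for which geometric convexity is formulated; the monotonicity assumption on $|f''|^q$ is precisely what makes the AM--GM comparison usable in the correct direction. Everything else (Hölder, the $t^{2p}$ integral, recognising the logarithmic mean) is routine.
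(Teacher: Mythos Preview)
Your proposal is correct and follows essentially the same route as the paper: Lemma~\ref{1} plus triangle inequality, then H\"older to split off $\left(\int_0^1 t^{2p}\,dt\right)^{1/p}=(2p+1)^{-1/p}$, then AM--GM together with the monotone-decreasing hypothesis to pass from $tx+(1-t)a$ to $x^t a^{1-t}$, and finally geometric convexity and the explicit integral yielding the logarithmic mean. One small typo to fix: in the geometric-convexity step you wrote $|f'(x)|^{qt}$ where you meant $|f''(x)|^{qt}$.
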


\begin{proof}
From Lemma \ref{1} with properties of modulus \ and using the H\"{o}lder
inequality, we have%
\begin{eqnarray}
&&  \label{3} \\
&&\left\vert \frac{1}{b-a}\int_{a}^{b}f(u)du-f(x)+\left( x-\frac{a+b}{2}%
\right) f^{\prime }(x)\right\vert  \notag \\
&\leq &\frac{(x-a)^{3}}{2(b-a)}\int_{0}^{1}t^{2}\left\vert f^{\prime \prime
}(tx+(1-t)a)\right\vert dt+\frac{(b-x)^{3}}{2(b-a)}\int_{0}^{1}t^{2}\left%
\vert f^{\prime \prime }(tx+(1-t)b)\right\vert dt  \notag
\end{eqnarray}%
\begin{eqnarray}
&\leq &\frac{(x-a)^{3}}{2(b-a)}\left( \int_{0}^{1}t^{2}{}^{p}dt\right) ^{%
\frac{1}{p}}\left( \int_{0}^{1}\left\vert f^{\prime \prime
}(tx+(1-t)a)\right\vert ^{q}dt\right) ^{\frac{1}{q}}  \notag \\
&&+\frac{(b-x)^{3}}{2(b-a)}\left( \int_{0}^{1}t^{2}{}^{p}dt\right) ^{\frac{1%
}{p}}\left( \int_{0}^{1}\left\vert f^{\prime \prime }(tx+(1-t)b)\right\vert
^{q}dt\right) ^{\frac{1}{q}}  \notag \\
&=&\frac{1}{(2p+1)^{\frac{1}{p}}}\left\{ \frac{(x-a)^{3}}{2(b-a)}\left(
\int_{0}^{1}\left\vert f^{\prime \prime }(tx+(1-t)a)\right\vert
^{q}dt\right) ^{\frac{1}{q}}\right.  \notag \\
&&\left. +\frac{(b-x)^{3}}{2(b-a)}\left( \int_{0}^{1}\left\vert f^{\prime
\prime }(tx+(1-t)b)\right\vert ^{q}dt\right) ^{\frac{1}{q}}\right\} .  \notag
\end{eqnarray}

Since $\left\vert f^{\prime \prime }\right\vert ^{q}$ is geometrically
convex and monotonically decreasing on $[a,b],$ we obtain 
\begin{eqnarray*}
x^{t}a^{1-t} &\leq &tx+(1-t)a \\
\left\vert f^{\prime \prime }(tx+(1-t)a)\right\vert ^{q} &\leq &\left\vert
f^{\prime \prime }(x^{t}a^{1-t})\right\vert ^{q}.
\end{eqnarray*}%
Therefore, we have 
\begin{eqnarray}
I &=&\int_{0}^{1}\left\vert f^{\prime \prime }(tx+(1-t)a)\right\vert ^{q}dt
\label{4} \\
&\leq &\int_{0}^{1}\left\vert f^{\prime \prime }(x^{t}a^{1-t})\right\vert
^{q}dt  \notag \\
&\leq &\int_{0}^{1}\left[ \left\vert f^{\prime \prime }(x)\right\vert
^{t}\left\vert f^{\prime \prime }(a)\right\vert ^{1-t}\right] ^{q}dt  \notag
\\
&=&L\left( \left\vert f^{\prime \prime }(x)\right\vert ^{q},\left\vert
f^{\prime \prime }(a)\right\vert ^{q}\right)  \notag
\end{eqnarray}%
and%
\begin{equation}
\int_{0}^{1}t^{2}{}^{p}dt=\frac{1}{2p+1}.  \label{10}
\end{equation}%
By making use of inequalities (\ref{10}) and (\ref{4}) in (\ref{3}), we
obtain (\ref{2}). This completes the proof.
\end{proof}

\begin{corollary}
Since $\frac{1}{3}<\frac{1}{(2p+1)^{\frac{1}{p}}}<1,$ if we choose $%
\left\vert f^{\prime \prime }(a)\right\vert =\left\vert f^{\prime \prime
}(b)\right\vert $ in Theorem \ref{cet}, we have%
\begin{eqnarray*}
&&\left\vert \frac{1}{b-a}\int_{a}^{b}f(u)du-f(x)+\left( x-\frac{a+b}{2}%
\right) f^{\prime }(x)\right\vert \\
&\leq &\frac{(x-a)^{3}+(b-x)^{3}}{2(b-a)}\left[ L\left( \left\vert f^{\prime
\prime }(x)\right\vert ^{q},\left\vert f^{\prime \prime }(a)\right\vert
^{q}\right) \right] ^{\frac{1}{q}}.
\end{eqnarray*}
\end{corollary}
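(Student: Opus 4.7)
The plan is to start directly from inequality (\ref{2}) established in Theorem \ref{cet} and simply specialize. First, I would impose the hypothesis $|f''(a)|=|f''(b)|$, which makes the two logarithmic means $L(|f''(x)|^q,|f''(a)|^q)$ and $L(|f''(x)|^q,|f''(b)|^q)$ coincide. Consequently the curly-brace expression in (\ref{2}) factors as
$$
\frac{(x-a)^{3}+(b-x)^{3}}{2(b-a)}\,\bigl[L(|f''(x)|^{q},|f''(a)|^{q})\bigr]^{1/q},
$$
multiplied by the constant $1/(2p+1)^{1/p}$.

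Next I would use the elementary estimate $1/(2p+1)^{1/p}<1$ (valid for all $1<p<\infty$) to drop the leading constant, which weakens the inequality in the correct direction and yields exactly the stated bound. The two-sided estimate $\tfrac{1}{3}<1/(2p+1)^{1/p}<1$ cited in the statement is verified by noting that $g(p)=(2p+1)^{1/p}$ satisfies $g(p)\to 1$ as $p\to\infty$ and $g(p)\to 3$ as $p\to 1^{+}$, with $g$ monotone on $(1,\infty)$; only the upper bound is actually used in the argument, while the lower bound serves to indicate how much is lost.

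There is no real obstacle: the corollary is a direct specialization of Theorem \ref{cet} combined with one crude numerical estimate, so the write-up will consist of just these two substitutions with no further integration or convexity argument required.
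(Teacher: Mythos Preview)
Your proposal is correct and follows exactly the approach the paper intends: the paper gives no separate proof for this corollary, but the very wording of the statement (``Since $\frac{1}{3}<\frac{1}{(2p+1)^{1/p}}<1$, if we choose $|f''(a)|=|f''(b)|$ in Theorem~\ref{cet}'') already signals that one substitutes $|f''(a)|=|f''(b)|$ into (\ref{2}) to collapse the two logarithmic means and then uses $1/(2p+1)^{1/p}<1$ to drop the leading constant. Your verification of the two-sided bound via the monotonicity of $p\mapsto(2p+1)^{1/p}$ is a welcome extra detail that the paper omits.
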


\begin{theorem}
\label{tin}Let $f:I\subset 
%TCIMACRO{\U{211d} }%
%BeginExpansion
\mathbb{R}
%EndExpansion
\rightarrow 
%TCIMACRO{\U{211d} }%
%BeginExpansion
\mathbb{R}
%EndExpansion
_{+}$ be differentiable function on $I^{\circ }$, $a,b\in I$ with $a<b$ and $%
f^{\prime \prime }\in L[a,b].$ If $\left\vert f^{\prime \prime }\right\vert
^{q}$ is geometrically convex and monotonically decreasing on $[a,b]$ for $%
q\geq 1$ and $t\in \lbrack 0,1],$ then the following inequality holds:%
\begin{eqnarray}
&&\left\vert \frac{1}{b-a}\int_{a}^{b}f(u)du-f(x)+\left( x-\frac{a+b}{2}%
\right) f^{\prime }(x)\right\vert   \label{5} \\
&\leq &\left( \frac{1}{3}\right) ^{1-\frac{1}{q}}\left\{ \frac{(x-a)^{3}}{%
2(b-a)}\left( \frac{k}{\ln k}-\frac{2k}{\left( \ln k\right) ^{2}}+\frac{2k}{%
\left( \ln k\right) ^{3}}\right) ^{\frac{1}{q}}\right.   \notag \\
&&\text{ \ \ \ \ \ \ \ \ \ \ \ \ }\left. +\frac{(b-x)^{3}}{2(b-a)}\left( 
\frac{l}{\ln l}-\frac{2l}{\left( \ln l\right) ^{2}}+\frac{2l}{\left( \ln
l\right) ^{3}}\right) ^{\frac{1}{q}}\right\}   \notag
\end{eqnarray}%
where%
\begin{equation*}
k=\frac{\left\vert f^{\prime \prime }(x)\right\vert ^{q}}{\left\vert
f^{\prime \prime }(a)\right\vert ^{q}}\text{ \ and \ \ }l=\frac{\left\vert
f^{\prime \prime }(x)\right\vert ^{q}}{\left\vert f^{\prime \prime
}(b)\right\vert ^{q}}.
\end{equation*}
\end{theorem}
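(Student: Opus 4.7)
The plan is to mirror the proof of Theorem \ref{cet}, but replacing the application of H\"older's inequality with the power mean inequality (since now $q\geq 1$ rather than $1<p<\infty$). Starting from Lemma \ref{1}, passing the absolute value inside the integrals gives
\begin{eqnarray*}
&&\left\vert \frac{1}{b-a}\int_{a}^{b}f(u)du-f(x)+\left( x-\frac{a+b}{2}\right) f^{\prime }(x)\right\vert \\
&\leq &\frac{(x-a)^{3}}{2(b-a)}\int_{0}^{1}t^{2}\left\vert f^{\prime \prime }(tx+(1-t)a)\right\vert dt +\frac{(b-x)^{3}}{2(b-a)}\int_{0}^{1}t^{2}\left\vert f^{\prime \prime }(tx+(1-t)b)\right\vert dt,
\end{eqnarray*}
and I would then write each integrand as $t^{2}=t^{2(1-1/q)}\cdot t^{2/q}$ so that the power mean inequality yields
\begin{equation*}
\int_{0}^{1}t^{2}\left\vert f^{\prime \prime }(tx+(1-t)a)\right\vert dt \leq \left(\int_{0}^{1}t^{2}dt\right)^{1-\frac{1}{q}}\left(\int_{0}^{1}t^{2}\left\vert f^{\prime \prime }(tx+(1-t)a)\right\vert^{q}dt\right)^{\frac{1}{q}},
\end{equation*}
and the same for the $b$ endpoint. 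The elementary integral $\int_{0}^{1}t^{2}dt=\tfrac{1}{3}$ supplies the prefactor $(1/3)^{1-1/q}$ in the statement.

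Next I would invoke the monotone decrease of $|f''|^{q}$ together with the AM--GM-type bound $x^{t}a^{1-t}\leq tx+(1-t)a$ to replace the integrand by its geometric-convex majorant, exactly as in \eqref{4}:
\begin{equation*}
\left\vert f^{\prime \prime }(tx+(1-t)a)\right\vert^{q}\leq \left\vert f^{\prime \prime }(x)\right\vert^{qt}\left\vert f^{\prime \prime }(a)\right\vert^{q(1-t)}=\left\vert f^{\prime \prime }(a)\right\vert^{q}k^{t},
\end{equation*}
with $k=|f''(x)|^{q}/|f''(a)|^{q}$ as defined in the statement, and analogously with $l$ on the $b$-side. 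Plugging in and pulling the constants out reduces the task to evaluating the weighted integral $\int_{0}^{1}t^{2}k^{t}dt$.

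The computational core of the proof is this last integral, which I would attack via integration by parts twice, first with $u=t^{2}$, $dv=k^{t}dt$, then with $u=t$, $dv=k^{t}dt$, producing antiderivatives of the form $t^{2}k^{t}/\ln k$, $tk^{t}/(\ln k)^{2}$, and $k^{t}/(\ln k)^{3}$. Evaluating at the endpoints $t=0,1$ gives precisely the three-term expression $\tfrac{k}{\ln k}-\tfrac{2k}{(\ln k)^{2}}+\tfrac{2k}{(\ln k)^{3}}$ appearing in \eqref{5} (and likewise in $l$).

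The only genuine obstacle is keeping the integration-by-parts bookkeeping straight and matching the constants so the terms line up with the form stated in \eqref{5}; everything else is a routine transcription of the Theorem \ref{cet} argument with H\"older replaced by power mean. Once the two endpoint integrals are inserted back into the bound from the power mean step, the desired inequality \eqref{5} follows directly.
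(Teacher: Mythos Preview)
Your approach is essentially identical to the paper's: start from Lemma~\ref{1}, apply the power-mean inequality with weight $t^{2}$ to extract the $(1/3)^{1-1/q}$ prefactor, then invoke the geometric-convexity and monotone-decrease bound $|f''(tx+(1-t)a)|^{q}\leq |f''(a)|^{q}k^{t}$, and finish by integrating $\int_{0}^{1}t^{2}k^{t}\,dt$ by parts. The paper is in fact terser than you on that last step, writing only ``By integration by parts, we have the inequality (\ref{5})''; note, however, that a careful evaluation at $t=0$ leaves an extra $-2/(\ln k)^{3}$ term and an overall factor $|f''(a)|$ that the stated form in (\ref{5}) appears to omit, so your bookkeeping remark is well placed.
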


\begin{proof}
From Lemma \ref{1}\ and using the well-known power-mean inequality, we have%
\begin{eqnarray*}
&&\left\vert \frac{1}{b-a}\int_{a}^{b}f(u)du-f(x)+\left( x-\frac{a+b}{2}%
\right) f^{\prime }(x)\right\vert  \\
&\leq &\frac{(x-a)^{3}}{2(b-a)}\int_{0}^{1}t^{2}\left\vert f^{\prime \prime
}(tx+(1-t)a)\right\vert dt+\frac{(b-x)^{3}}{2(b-a)}\int_{0}^{1}t^{2}\left%
\vert f^{\prime \prime }(tx+(1-t)b)\right\vert dt \\
&\leq &\frac{(x-a)^{3}}{2(b-a)}\left( \int_{0}^{1}t^{2}dt\right) ^{1-\frac{1%
}{q}}\left( \int_{0}^{1}t^{2}\left\vert f^{\prime \prime
}(tx+(1-t)a)\right\vert ^{q}dt\right) ^{\frac{1}{q}} \\
&&+\frac{(b-x)^{3}}{2(b-a)}\left( \int_{0}^{1}t^{2}dt\right) ^{1-\frac{1}{q}%
}\left( \int_{0}^{1}t^{2}\left\vert f^{\prime \prime }(tx+(1-t)b)\right\vert
^{q}dt\right) ^{\frac{1}{q}} \\
&=&\left( \frac{1}{3}\right) ^{1-\frac{1}{q}}\left\{ \frac{(x-a)^{3}}{2(b-a)}%
\left( \int_{0}^{1}t^{2}\left\vert f^{\prime \prime }(tx+(1-t)a)\right\vert
^{q}dt\right) ^{\frac{1}{q}}\right.  \\
&&\left. +\frac{(b-x)^{3}}{2(b-a)}\left( \int_{0}^{1}t^{2}\left\vert
f^{\prime \prime }(tx+(1-t)b)\right\vert ^{q}dt\right) ^{\frac{1}{q}%
}\right\} .
\end{eqnarray*}%
Since $\left\vert f^{\prime \prime }\right\vert ^{q}$ is geometrically
convex and monotonically decreasing on $[a,b],$ we have%
\begin{eqnarray*}
&&\left\vert \frac{1}{b-a}\int_{a}^{b}f(u)du-f(x)+\left( x-\frac{a+b}{2}%
\right) f^{\prime }(x)\right\vert  \\
&\leq &\left( \frac{1}{3}\right) ^{1-\frac{1}{q}}\left\{ \frac{(x-a)^{3}}{%
2(b-a)}\left( \int_{0}^{1}t^{2}\left[ \left\vert f^{\prime \prime
}(x)\right\vert ^{t}\left\vert f^{\prime \prime }(a)\right\vert ^{1-t}\right]
^{q}dt\right) ^{\frac{1}{q}}\right.  \\
&&\left. +\frac{(b-x)^{3}}{2(b-a)}\left( \int_{0}^{1}t^{2}\left[ \left\vert
f^{\prime \prime }(x)\right\vert ^{t}\left\vert f^{\prime \prime
}(b)\right\vert ^{1-t}\right] ^{q}dt\right) ^{\frac{1}{q}}\right\} .
\end{eqnarray*}%
By integration by parts, we have the inequality (\ref{5}).
\end{proof}

\begin{corollary}
From Theorem \ref{cet} and Theorem \ref{tin}, we have%
\begin{equation*}
\left\vert \frac{1}{b-a}\int_{a}^{b}f(u)du-f(x)+\left( x-\frac{a+b}{2}%
\right) f^{\prime }(x)\right\vert \leq \min \left\{ v_{1},v_{2}\right\} 
\end{equation*}%
where%
\begin{equation*}
v_{1}=\frac{1}{(2p+1)^{\frac{1}{p}}}\left\{ \frac{(x-a)^{3}}{2(b-a)}\left[
L\left( \left\vert f^{\prime \prime }(x)\right\vert ^{q},\left\vert
f^{\prime \prime }(a)\right\vert ^{q}\right) \right] ^{\frac{1}{q}}+\frac{%
(b-x)^{3}}{2(b-a)}\left[ L\left( \left\vert f^{\prime \prime }(x)\right\vert
^{q},\left\vert f^{\prime \prime }(b)\right\vert ^{q}\right) \right] ^{\frac{%
1}{q}}\right\} 
\end{equation*}%
and%
\begin{equation*}
v_{2}=\left( \frac{1}{3}\right) ^{1-\frac{1}{q}}\left\{ \frac{(x-a)^{3}}{%
2(b-a)}\left( \frac{k}{\ln k}-\frac{2k}{\left( \ln k\right) ^{2}}+\frac{2k}{%
\left( \ln k\right) ^{3}}\right) ^{\frac{1}{q}}+\frac{(b-x)^{3}}{2(b-a)}%
\left( \frac{l}{\ln l}-\frac{2l}{\left( \ln l\right) ^{2}}+\frac{2l}{\left(
\ln l\right) ^{3}}\right) ^{\frac{1}{q}}\right\} 
\end{equation*}%
\begin{equation*}
k=\frac{\left\vert f^{\prime \prime }(x)\right\vert ^{q}}{\left\vert
f^{\prime \prime }(a)\right\vert ^{q}}\text{ \ and \ \ }l=\frac{\left\vert
f^{\prime \prime }(x)\right\vert ^{q}}{\left\vert f^{\prime \prime
}(b)\right\vert ^{q}}.
\end{equation*}
\end{corollary}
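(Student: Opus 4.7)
The plan is to observe that this corollary is essentially a tautological combination: the left-hand quantity is the \emph{same} expression bounded in both Theorem \ref{cet} and Theorem \ref{tin}, so any quantity that is simultaneously at most $v_1$ and at most $v_2$ is trivially at most $\min\{v_1,v_2\}$. So the proof is nothing more than quoting the two theorems and writing the minimum.

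More concretely, I would first invoke Theorem \ref{cet} under its hypotheses ($f''\in L_1[a,b]$, $|f''|^q$ geometrically convex and monotonically decreasing, $1<p<\infty$ with $\tfrac1p+\tfrac1q=1$) to conclude
\[
\left\vert \frac{1}{b-a}\int_{a}^{b}f(u)\,du-f(x)+\left(x-\frac{a+b}{2}\right)f'(x)\right\vert \leq v_{1}.
\]
Then, under the hypotheses of Theorem \ref{tin} (now requiring only $q\ge 1$), I would invoke that theorem to get the same left-hand side bounded by $v_2$. Taking the smaller of the two numerical bounds yields the stated conclusion, and I would note that both sets of hypotheses are assumed to hold (which is how the corollary is phrased: the bound $v_1$ is available whenever the Hölder-based estimate applies, and $v_2$ whenever the power-mean-based estimate applies).

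The only subtlety worth flagging is that the two theorems do not have identical hypotheses: Theorem \ref{cet} requires $1<p<\infty$ with conjugate exponent $q>1$, while Theorem \ref{tin} allows $q\ge 1$. So strictly speaking one should state the corollary in the overlapping regime $q>1$ (equivalently $1<p<\infty$), where both bounds are legitimately available and the minimum is meaningful. There is no real obstacle in the proof — it is just a one-line assembly — so the main thing to be careful about is the logical quantifier structure of the hypotheses rather than any analytic estimate.
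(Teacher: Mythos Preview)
Your proposal is correct and matches the paper's treatment: the paper states this corollary without proof, treating it as an immediate consequence of the two theorems, which is exactly the one-line assembly you describe. Your remark about restricting to the overlapping regime $q>1$ (so that both $v_1$ and $v_2$ are available) is a reasonable clarification that the paper itself leaves implicit.
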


\begin{theorem}
\label{cett}Let $f:I^{\circ }\subset 
%TCIMACRO{\U{211d} }%
%BeginExpansion
\mathbb{R}
%EndExpansion
_{+}\rightarrow 
%TCIMACRO{\U{211d} }%
%BeginExpansion
\mathbb{R}
%EndExpansion
_{+}$ be differentiable function on $I^{\circ }$, $a,b\in I$ with $a<b$ and $%
f^{\prime }\in L_{1}[a,b].$ If $\left\vert f^{\prime }\right\vert ^{q}$ is
geometrically convex and monotonically decreasing on $[a,b]$ and $t\in
\lbrack 0,1],$ then we have the following inequality:%
\begin{eqnarray}
&&  \label{6} \\
&&\left\vert \frac{(b-x)f(b)+(x-a)f(a)}{b-a}-\frac{1}{b-a}%
\int_{a}^{b}f(u)du\right\vert   \notag \\
&\leq &\frac{1}{(p+1)^{\frac{1}{p}}}\left\{ \frac{(x-a)^{2}}{b-a}\left[
L\left( \left\vert f^{\prime }(x)\right\vert ^{q},\left\vert f^{\prime
}(a)\right\vert ^{q}\right) \right] ^{\frac{1}{q}}+\frac{(b-x)^{2}}{b-a}%
\left[ L\left( \left\vert f^{\prime }(x)\right\vert ^{q},\left\vert
f^{\prime }(b)\right\vert ^{q}\right) \right] ^{\frac{1}{q}}\right\}   \notag
\end{eqnarray}%
where $1<p<\infty ,$ $\frac{1}{p}+\frac{1}{q}=1.$ $L($ $,$ $)$ is
Logarithmic mean for real numbers.
\end{theorem}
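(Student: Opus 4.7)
The strategy is to mimic the proof of Theorem \ref{cet}, replacing Lemma \ref{1} by Lemma \ref{7}. Structurally, the kernel $t^{2}$ that appeared in Theorem \ref{cet} is now played by $(1-t)$ (and $|t-1|=1-t$ in the second integrand), while the hypotheses on $|f''|^{q}$ are transferred to $|f'|^{q}$.

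First, I would apply the triangle inequality to the identity of Lemma \ref{7}, using $|t-1|=1-t$ on $[0,1]$, and then bound each of the two resulting integrals by H\"{o}lder's inequality with conjugate exponents $p,q$, pairing the weight $(1-t)$ against itself and $|f'|$ by itself:
\begin{eqnarray*}
\int_{0}^{1}(1-t)|f'(tx+(1-t)a)|\,dt &\leq& \left(\int_{0}^{1}(1-t)^{p}dt\right)^{\frac{1}{p}}\left(\int_{0}^{1}|f'(tx+(1-t)a)|^{q}dt\right)^{\frac{1}{q}}.
\end{eqnarray*}
The elementary computation $\int_{0}^{1}(1-t)^{p}dt=1/(p+1)$ produces the prefactor $1/(p+1)^{1/p}$ appearing in (\ref{6}).

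Second, the geometric convexity and monotonic decrease of $|f'|^{q}$ yield, by exactly the same argument as in equation (\ref{4}) of the proof of Theorem \ref{cet},
\begin{equation*}
\int_{0}^{1}|f'(tx+(1-t)a)|^{q}dt\leq \int_{0}^{1}\bigl[|f'(x)|^{t}|f'(a)|^{1-t}\bigr]^{q}dt=L\bigl(|f'(x)|^{q},|f'(a)|^{q}\bigr),
\end{equation*}
where I would use $x^{t}a^{1-t}\leq tx+(1-t)a$ (AM--GM) together with monotonic decrease to pass to $|f'(x^{t}a^{1-t})|^{q}$, then geometric convexity to split the resulting expression, then compute the logarithmic mean explicitly. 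The analogous bound holds for the integral involving $b$ with $a$ replaced by $b$. Combining these ingredients yields (\ref{6}).

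I do not foresee any real obstacle: the argument is essentially a transcription of the Theorem \ref{cet} proof, the only substantive change being the H\"{o}lder-exponent computation $\int_{0}^{1}(1-t)^{p}dt=1/(p+1)$ in place of $\int_{0}^{1}t^{2p}dt=1/(2p+1)$. The logarithmic-mean step is unchanged because the $(1-t)$ weight is absorbed entirely into the first H\"{o}lder factor, leaving the second factor purely in the form $\int_{0}^{1}|f'|^{q}dt$, to which the geometric-convexity estimate applies verbatim.
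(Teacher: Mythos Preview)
Your proposal is correct and follows essentially the same approach as the paper's own proof: start from Lemma \ref{7}, take absolute values and apply H\"{o}lder with the weight $(1-t)$ going entirely into the $p$-factor, compute $\int_{0}^{1}(1-t)^{p}dt=1/(p+1)$, and then bound $\int_{0}^{1}|f'(tx+(1-t)a)|^{q}dt$ by the logarithmic mean via AM--GM, monotonic decrease, and geometric convexity exactly as in (\ref{4}).
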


\begin{proof}
From Lemma \ref{7} with properties of absolute value\ and using the H\"{o}%
lder inequality, we obtain%
\begin{eqnarray*}
&&\left\vert \frac{(b-x)f(b)+(x-a)f(a)}{b-a}-\frac{1}{b-a}%
\int_{a}^{b}f(u)du\right\vert  \\
&\leq &\frac{(x-a)^{2}}{b-a}\int_{0}^{1}\left\vert 1-t\right\vert \left\vert
f^{\prime }(tx+(1-t)a)\right\vert dt+\frac{(b-x)^{2}}{b-a}%
\int_{0}^{1}\left\vert t-1\right\vert \left\vert f^{\prime
}(tx+(1-t)b)\right\vert dt \\
&\leq &\frac{(x-a)^{2}}{b-a}\left( \int_{0}^{1}(1-t{})^{p}dt\right) ^{\frac{1%
}{p}}\left( \int_{0}^{1}\left\vert f^{\prime }(tx+(1-t)a)\right\vert
^{q}dt\right) ^{\frac{1}{q}} \\
&&+\frac{(b-x)^{2}}{b-a}\left( \int_{0}^{1}(1-t{})^{p}dt\right) ^{\frac{1}{p}%
}\left( \int_{0}^{1}\left\vert f^{\prime }(tx+(1-t)b)\right\vert
^{q}dt\right) ^{\frac{1}{q}} \\
&=&\frac{1}{(p+1)^{\frac{1}{p}}}\left\{ \frac{(x-a)^{2}}{b-a}\left(
\int_{0}^{1}\left\vert f^{\prime }(tx+(1-t)a)\right\vert ^{q}dt\right) ^{%
\frac{1}{q}}\right.  \\
&&\left. +\frac{(b-x)^{2}}{b-a}\left( \int_{0}^{1}\left\vert f^{\prime
}(tx+(1-t)b)\right\vert ^{q}dt\right) ^{\frac{1}{q}}\right\} .
\end{eqnarray*}

Since $\left\vert f^{\prime }\right\vert ^{q}$ is geometrically convex and
monotonically decreasing on $[a,b],$ we obtain 
\begin{eqnarray*}
x^{t}a^{1-t} &\leq &tx+(1-t)a \\
\left\vert f^{\prime \prime }(tx+(1-t)a)\right\vert ^{q} &\leq &\left\vert
f^{\prime \prime }(x^{t}a^{1-t})\right\vert ^{q}.
\end{eqnarray*}%
Therefore, we have%
\begin{eqnarray*}
K &=&\int_{0}^{1}\left\vert f^{\prime }(tx+(1-t)a)\right\vert ^{q}dt \\
&\leq &\int_{0}^{1}\left\vert f^{\prime }(x^{t}a^{1-t})\right\vert ^{q}dt \\
&\leq &\int_{0}^{1}\left[ \left\vert f^{\prime }(x)\right\vert
^{t}\left\vert f^{\prime }(a)\right\vert ^{1-t}\right] ^{q}dt \\
&=&L\left( \left\vert f^{\prime }(x)\right\vert ^{q},\left\vert f^{\prime
}(a)\right\vert ^{q}\right) 
\end{eqnarray*}%
and%
\begin{equation*}
\int_{0}^{1}(1-t{})^{p}dt=\frac{1}{p+1}.
\end{equation*}%
This completes the proof.
\end{proof}

\begin{remark}
In Theorem \ref{cett}, if we take $f(x)=\frac{(b-x)f(b)+(x-a)f(a)}{b-a},$ we
obtain the inequality (\ref{9}).
\end{remark}

\begin{corollary}
Since $\frac{1}{(p+1)^{\frac{1}{p}}}<\allowbreak 1$ for $1<p<\infty $, if we
choose $\left\vert f^{\prime }(a)\right\vert =\left\vert f^{\prime
}(b)\right\vert $ in Theorem \ref{cett}, we have%
\begin{eqnarray*}
&&\left\vert \frac{(b-x)f(b)+(x-a)f(a)}{b-a}-\frac{1}{b-a}%
\int_{a}^{b}f(u)du\right\vert  \\
&\leq &\frac{(x-a)^{2}+(b-x)^{2}}{b-a}\left[ L\left( \left\vert f^{\prime
}(x)\right\vert ^{q},\left\vert f^{\prime }(a)\right\vert ^{q}\right) \right]
^{\frac{1}{q}}.
\end{eqnarray*}
\end{corollary}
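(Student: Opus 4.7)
The plan is to obtain this corollary directly by specializing Theorem~\ref{cett} and then discarding a harmless constant prefactor. First, I would substitute the hypothesis $\left\vert f^{\prime}(a)\right\vert = \left\vert f^{\prime}(b)\right\vert$ into the right-hand side of inequality~(\ref{6}). Because the logarithmic mean depends only on its two arguments and one of them is shared, the two bracketed factors become identical: $L(\left\vert f^{\prime}(x)\right\vert^{q},\left\vert f^{\prime}(a)\right\vert^{q}) = L(\left\vert f^{\prime}(x)\right\vert^{q},\left\vert f^{\prime}(b)\right\vert^{q})$. This lets me pull the common factor $[L(\left\vert f^{\prime}(x)\right\vert^{q},\left\vert f^{\prime}(a)\right\vert^{q})]^{1/q}$ out of the braces, reducing the right-hand side to
\[
\frac{1}{(p+1)^{1/p}}\cdot\frac{(x-a)^{2}+(b-x)^{2}}{b-a}\left[L\!\left(\left\vert f^{\prime}(x)\right\vert^{q},\left\vert f^{\prime}(a)\right\vert^{q}\right)\right]^{1/q}.
\]

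Second, I would invoke the elementary inequality $\frac{1}{(p+1)^{1/p}}<1$ stated at the opening of the corollary (which follows at once from $p+1>1$ and $1/p>0$, so that $(p+1)^{1/p}>1$) to replace that prefactor by $1$, giving the claimed bound.

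There is essentially no real obstacle: the argument is a one-line specialization of Theorem~\ref{cett} together with a trivial monotonicity remark. The only small point worth being explicit about is that weakening $(p+1)^{-1/p}$ to $1$ preserves the inequality because every quantity multiplied by it is nonnegative (it is a sum of absolute values and logarithmic means of positive numbers), so the direction of the estimate is unchanged.
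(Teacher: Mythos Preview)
Your proposal is correct and matches the paper's approach exactly: the paper gives no separate proof for this corollary, as the argument is fully encoded in the statement itself (specialize Theorem~\ref{cett} via $|f'(a)|=|f'(b)|$, then drop the prefactor using $\frac{1}{(p+1)^{1/p}}<1$). Your added remark that the weakening is legitimate because all factors are nonnegative is a welcome point of rigor that the paper leaves implicit.
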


\begin{theorem}
\label{yil}Let $f:I\subset 
%TCIMACRO{\U{211d} }%
%BeginExpansion
\mathbb{R}
%EndExpansion
\rightarrow 
%TCIMACRO{\U{211d} }%
%BeginExpansion
\mathbb{R}
%EndExpansion
_{+}$ be differentiable function on $I^{\circ }$, $a,b\in I$ with $a<b$ and $%
f^{\prime }\in L[a,b].$ If $\left\vert f^{\prime }\right\vert ^{q}$ is
geometrically convex and monotonically decreasing on $[a,b]$ for $q\geq 1$
and $t\in \lbrack 0,1],$ then the following inequality holds:%
\begin{eqnarray}
&&\left\vert \frac{(b-x)f(b)+(x-a)f(a)}{b-a}-\frac{1}{b-a}%
\int_{a}^{b}f(u)du\right\vert   \label{8} \\
&\leq &\left( \frac{1}{2}\right) ^{1-\frac{1}{q}}\left\{ \frac{(x-a)^{2}}{b-a%
}\left\vert f^{\prime }(a)\right\vert \left( \frac{k-\log k-1}{\left( \log
k\right) ^{2}}\right) ^{\frac{1}{q}}+\frac{(b-x)^{2}}{b-a}\left\vert
f^{\prime }(b)\right\vert \left( \frac{l-\log l-1}{\left( \log l\right) ^{2}}%
\right) ^{\frac{1}{q}}\right\}   \notag
\end{eqnarray}%
where%
\begin{equation*}
k=\frac{\left\vert f^{\prime }(x)\right\vert ^{q}}{\left\vert f^{\prime
}(a)\right\vert ^{q}}\text{ \ and \ \ }l=\frac{\left\vert f^{\prime
}(x)\right\vert ^{q}}{\left\vert f^{\prime }(b)\right\vert ^{q}}.
\end{equation*}
\end{theorem}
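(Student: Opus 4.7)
The plan is to mirror the approach of Theorem \ref{tin}, but starting from Lemma \ref{7} instead of Lemma \ref{1}, and with the weight $(1-t)$ (or $|t-1|$) replacing $t^2$. So first I would take absolute values in the identity of Lemma \ref{7} and dominate the left-hand side by
\begin{equation*}
\frac{(x-a)^{2}}{b-a}\int_{0}^{1}(1-t)\,|f'(tx+(1-t)a)|\,dt+\frac{(b-x)^{2}}{b-a}\int_{0}^{1}(1-t)\,|f'(tx+(1-t)b)|\,dt.
\end{equation*}

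Next I would apply the power-mean inequality to each integral with weight $(1-t)$, writing $(1-t)|f'| = (1-t)^{1-1/q}\bigl[(1-t)|f'|^{q}\bigr]^{1/q}$ and using $\int_{0}^{1}(1-t)\,dt=\tfrac{1}{2}$, to get
\begin{equation*}
\int_{0}^{1}(1-t)\,|f'(tx+(1-t)a)|\,dt\;\le\;\Bigl(\tfrac12\Bigr)^{1-\frac{1}{q}}\Bigl(\int_{0}^{1}(1-t)\,|f'(tx+(1-t)a)|^{q}\,dt\Bigr)^{\frac{1}{q}},
\end{equation*}
and likewise for the $b$ side. This produces the prefactor $(1/2)^{1-1/q}$ and places the geometric-convexity estimate under a weighted integral.

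Then I would invoke geometric convexity and monotone decrease of $|f'|^{q}$ exactly as in Theorem \ref{cet} and Theorem \ref{cett}: since $x^{t}a^{1-t}\le tx+(1-t)a$ and $|f'|^{q}$ is decreasing,
\begin{equation*}
|f'(tx+(1-t)a)|^{q}\;\le\;|f'(x^{t}a^{1-t})|^{q}\;\le\;|f'(x)|^{qt}|f'(a)|^{q(1-t)}.
\end{equation*}
Substituting and factoring gives $|f'(a)|^{q}\int_{0}^{1}(1-t)\,k^{t}\,dt$ with $k=|f'(x)|^{q}/|f'(a)|^{q}$, and analogously on the other side with $l$.

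The only real computational step is evaluating $\int_{0}^{1}(1-t)\,k^{t}\,dt$, which I would do by integration by parts (or by splitting $\int_{0}^{1}k^{t}\,dt-\int_{0}^{1}tk^{t}\,dt$); this yields
\begin{equation*}
\int_{0}^{1}(1-t)\,k^{t}\,dt\;=\;\frac{k-\log k-1}{(\log k)^{2}}.
\end{equation*}
Taking $q$-th roots then extracts the factor $|f'(a)|\bigl(\tfrac{k-\log k-1}{(\log k)^{2}}\bigr)^{1/q}$, with the symmetric factor involving $l$ on the $b$ side, giving exactly (\ref{8}). The main obstacle is just carrying out that integration-by-parts cleanly and keeping track of the exponents so that the $|f'(a)|$ and $|f'(b)|$ factors emerge outside the $q$-th root as in the statement; nothing conceptually new is required beyond what was used in Theorems \ref{cet}--\ref{cett}.
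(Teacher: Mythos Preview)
Your proposal is correct and follows essentially the same route as the paper: Lemma \ref{7}, then the power-mean inequality with weight $(1-t)$, then the monotonicity plus geometric-convexity bound $|f'(tx+(1-t)a)|^{q}\le |f'(x)|^{qt}|f'(a)|^{q(1-t)}$. In fact you are slightly more explicit than the paper, which stops at the integral $\int_{0}^{1}(1-t)\bigl(|f'(x)|^{t}|f'(a)|^{1-t}\bigr)^{q}dt$ and declares the proof complete, whereas you carry out the integration-by-parts to obtain $(k-\log k-1)/(\log k)^{2}$.
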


\begin{proof}
From Lemma \ref{7}\ and using the well-known power-mean inequality, we have%
\begin{eqnarray*}
&&\left\vert \frac{(b-x)f(b)+(x-a)f(a)}{b-a}-\frac{1}{b-a}%
\int_{a}^{b}f(u)du\right\vert  \\
&\leq &\frac{(x-a)^{2}}{b-a}\int_{0}^{1}\left\vert 1-t\right\vert \left\vert
f^{\prime }(tx+(1-t)a)\right\vert dt+\frac{(b-x)^{2}}{b-a}%
\int_{0}^{1}\left\vert 1-t\right\vert \left\vert f^{\prime
}(tx+(1-t)b)\right\vert dt \\
&\leq &\frac{(x-a)^{2}}{b-a}\left( \int_{0}^{1}(1-t)dt\right) ^{1-\frac{1}{q}%
}\left( \int_{0}^{1}(1-t)\left\vert f^{\prime }(tx+(1-t)a)\right\vert
^{q}dt\right) ^{\frac{1}{q}} \\
&&+\frac{(b-x)^{2}}{b-a}\left( \int_{0}^{1}(1-t)dt\right) ^{1-\frac{1}{q}%
}\left( \int_{0}^{1}(1-t)\left\vert f^{\prime }(tx+(1-t)b)\right\vert
^{q}dt\right) ^{\frac{1}{q}}.
\end{eqnarray*}%
Since $\left\vert f^{\prime }\right\vert ^{q}$ is geometrically convex and
monotonically decreasing on $[a,b],$ we have%
\begin{eqnarray*}
&&\left\vert \frac{(b-x)f(b)+(x-a)f(a)}{b-a}-\frac{1}{b-a}%
\int_{a}^{b}f(u)du\right\vert  \\
&\leq &\left( \frac{1}{2}\right) ^{1-\frac{1}{q}}\left\{ \frac{(x-a)^{2}}{b-a%
}\left( \int_{0}^{1}(1-t)\left\vert f^{\prime }(x^{t}a^{1-t})\right\vert
^{q}dt\right) ^{\frac{1}{q}}\right.  \\
&&\text{ \ \ \ \ \ }\left. +\frac{(b-x)^{2}}{b-a}\left(
\int_{0}^{1}(1-t)\left\vert f^{\prime }(x^{t}b^{1-t})\right\vert
^{q}dt\right) ^{\frac{1}{q}}\right\}  \\
&\leq &\left( \frac{1}{2}\right) ^{1-\frac{1}{q}}\left\{ \frac{(x-a)^{2}}{b-a%
}\left( \int_{0}^{1}(1-t)\left( \left\vert f^{\prime }(x)\right\vert
^{t}\left\vert f^{\prime }(a)\right\vert ^{1-t}\right) ^{q}dt\right) ^{\frac{%
1}{q}}\right.  \\
&&\text{ \ \ \ \ }\left. +\frac{(b-x)^{2}}{b-a}\left(
\int_{0}^{1}(1-t)\left( \left\vert f^{\prime }(x)\right\vert ^{t}\left\vert
f^{\prime }(b)\right\vert ^{1-t}\right) ^{q}dt\right) ^{\frac{1}{q}}\right\}
.
\end{eqnarray*}%
This completes the proof.
\end{proof}

\begin{corollary}
Since $\frac{1}{2}<\left( \frac{1}{2}\right) ^{1-\frac{1}{q}}<1$, if we
choose $\left\vert f^{\prime }(a)\right\vert =\left\vert f^{\prime
}(b)\right\vert $ in Theorem \ref{yil}, we have%
\begin{eqnarray*}
&&\left\vert \frac{(b-x)f(b)+(x-a)f(a)}{b-a}-\frac{1}{b-a}%
\int_{a}^{b}f(u)du\right\vert  \\
&\leq &\left( \frac{k-\log k-1}{\left( \log k\right) ^{2}}\right) ^{\frac{1}{%
q}}\left\vert f^{\prime }(a)\right\vert +\frac{(x-a)^{2}+(b-x)^{2}}{b-a}.
\end{eqnarray*}
\end{corollary}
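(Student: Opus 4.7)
The plan is to derive this as a direct specialization of Theorem \ref{yil}. First I would substitute the hypothesis $|f'(a)|=|f'(b)|$ into the bound \eqref{8}. This immediately forces the auxiliary quantities $k=|f'(x)|^q/|f'(a)|^q$ and $l=|f'(x)|^q/|f'(b)|^q$ to coincide, so the two summands inside the braces in \eqref{8} share the common logarithmic factor $\bigl(\tfrac{k-\log k-1}{(\log k)^2}\bigr)^{1/q}$ and the common multiplier $|f'(a)|$.

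Next I would factor these common quantities out of the braced expression, leaving only the geometric coefficients $(x-a)^2/(b-a)$ and $(b-x)^2/(b-a)$, which combine into the single term $\bigl((x-a)^2+(b-x)^2\bigr)/(b-a)$. This reduces the right-hand side of \eqref{8} to
\[
\Bigl(\tfrac{1}{2}\Bigr)^{1-\tfrac{1}{q}}\,|f'(a)|\,\Bigl(\tfrac{k-\log k-1}{(\log k)^{2}}\Bigr)^{\tfrac{1}{q}}\,\tfrac{(x-a)^{2}+(b-x)^{2}}{b-a}.
\]
Finally I would apply the elementary estimate stated at the head of the corollary, namely $\bigl(\tfrac{1}{2}\bigr)^{1-1/q}<1$ for $q\geq 1$, to drop the prefactor and obtain a bound independent of that exponent.

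The only real subtlety is interpreting the displayed right-hand side: the natural output of the above reduction is the \emph{product} of $|f'(a)|(\tfrac{k-\log k-1}{(\log k)^2})^{1/q}$ with $\tfrac{(x-a)^2+(b-x)^2}{b-a}$, whereas the stated bound prints a ``$+$'' between these two factors. I read this as a typographical slip in the display (a missing multiplication) rather than a separate analytic step, since no step in the specialization of Theorem \ref{yil} can convert a product of these positive quantities into their sum without additional hypotheses (e.g.\ boundedness). Consequently the proof plan is nothing more than the three-line substitution above, with a brief remark identifying the displayed ``$+$'' as an implicit multiplication.
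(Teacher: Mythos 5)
Your proposal is correct and matches the paper's (implicit) derivation: the corollary is stated without proof as a direct specialization of Theorem \ref{yil}, and the intended argument is exactly your substitution $|f'(a)|=|f'(b)|$, which forces $k=l$, lets the common factor $|f'(a)|\bigl(\tfrac{k-\log k-1}{(\log k)^2}\bigr)^{1/q}$ be pulled out of the braces, and then drops the prefactor via $(\tfrac{1}{2})^{1-1/q}<1$. Your diagnosis of the displayed ``$+$'' as a typographical slip for multiplication is well founded, since the analogous corollaries following Theorems \ref{cet} and \ref{cett} print the corresponding bound in product form, and no legitimate step could convert the product into a sum.
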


\begin{corollary}
From Theorem \ref{cett} and Theorem \ref{yil}, again we have%
\begin{equation*}
\left\vert \frac{1}{b-a}\int_{a}^{b}f(u)du-f(x)+\left( x-\frac{a+b}{2}%
\right) f^{\prime }(x)\right\vert \leq \min \left\{ \eta _{1},\eta
_{2}\right\} 
\end{equation*}%
where%
\begin{equation*}
\eta _{1}=\frac{1}{(p+1)^{\frac{1}{p}}}\left\{ \frac{(x-a)^{2}}{b-a}\left[
L\left( \left\vert f^{\prime }(x)\right\vert ^{q},\left\vert f^{\prime
}(a)\right\vert ^{q}\right) \right] ^{\frac{1}{q}}+\frac{(b-x)^{2}}{b-a}%
\left[ L\left( \left\vert f^{\prime }(x)\right\vert ^{q},\left\vert
f^{\prime }(b)\right\vert ^{q}\right) \right] ^{\frac{1}{q}}\right\} 
\end{equation*}%
and%
\begin{equation*}
\eta _{2}=\left( \frac{1}{2}\right) ^{1-\frac{1}{q}}\left\{ \frac{(x-a)^{2}}{%
b-a}\left\vert f^{\prime }(a)\right\vert \left( \frac{k-\log k-1}{\left(
\log k\right) ^{2}}\right) ^{\frac{1}{q}}+\frac{(b-x)^{2}}{b-a}\left\vert
f^{\prime }(b)\right\vert \left( \frac{l-\log l-1}{\left( \log l\right) ^{2}}%
\right) ^{\frac{1}{q}}\right\} 
\end{equation*}%
\begin{equation*}
k=\frac{\left\vert f^{\prime }(x)\right\vert ^{q}}{\left\vert f^{\prime
}(a)\right\vert ^{q}}\text{ \ and \ \ }l=\frac{\left\vert f^{\prime
}(x)\right\vert ^{q}}{\left\vert f^{\prime }(b)\right\vert ^{q}}.
\end{equation*}
\end{corollary}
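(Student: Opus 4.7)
The plan is to recognize that this corollary is an immediate juxtaposition of two upper bounds already established in the preceding theorems. Theorems \ref{cett} and \ref{yil} operate under essentially the same hypotheses on $f$ (namely, $|f'|^q$ geometrically convex and monotonically decreasing on $[a,b]$), but produce two structurally different upper estimates: the first uses Hölder's inequality with conjugate exponents satisfying $\tfrac{1}{p}+\tfrac{1}{q}=1$, while the second uses the power-mean inequality for $q\ge 1$. Neither bound dominates the other pointwise in all data, so the sharper conclusion is to take the minimum.

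Concretely, I would proceed as follows. First, invoke Theorem \ref{cett} to obtain that the relevant left-hand side is bounded above by $\eta_{1}$. Next, invoke Theorem \ref{yil} to obtain that the same left-hand side is bounded above by $\eta_{2}$. Finally, apply the elementary observation that if a nonnegative quantity $A$ satisfies $A\le \eta_{1}$ and $A\le \eta_{2}$, then $A\le \min\{\eta_{1},\eta_{2}\}$. No further estimation, integration, or inequality manipulation is required beyond what was done inside the two source theorems.

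A caveat I would flag before writing the argument: the left-hand side printed in the stated corollary is the Ostrowski-type expression $\bigl|\tfrac{1}{b-a}\int_{a}^{b}f(u)\,du-f(x)+\bigl(x-\tfrac{a+b}{2}\bigr)f'(x)\bigr|$, whereas the bounds $\eta_{1},\eta_{2}$ as defined are precisely the right-hand sides of Theorems \ref{cett} and \ref{yil}, which majorize the trapezoidal quantity $\bigl|\tfrac{(b-x)f(b)+(x-a)f(a)}{b-a}-\tfrac{1}{b-a}\int_{a}^{b}f(u)\,du\bigr|$. This appears to be a typographical carry-over from the earlier corollary that combines Theorems \ref{cet} and \ref{tin}; the plan above yields the intended statement once the trapezoidal expression is understood on the left.

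The main obstacle — really the only point that requires attention — is compatibility of hypotheses so that \emph{both} source theorems may be invoked simultaneously. Theorem \ref{cett} requires $1<p<\infty$ with $\tfrac{1}{p}+\tfrac{1}{q}=1$ (equivalently $q\in(1,\infty)$), while Theorem \ref{yil} requires $q\ge 1$. The intersection of these regimes is $q>1$, in which case both bounds are in force and the minimum is a legitimate upper bound. Outside this range (i.e., $q=1$) only $\eta_{2}$ is available, and the statement degenerates to Theorem \ref{yil} alone; this boundary case I would mention in passing but it requires no separate argument.
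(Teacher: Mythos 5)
Your proposal is correct and matches the paper's (implicit) argument exactly: the corollary is stated without proof precisely because it is the immediate combination of the two theorems' bounds via $A\le\eta_1$, $A\le\eta_2\Rightarrow A\le\min\{\eta_1,\eta_2\}$. You are also right to flag that the left-hand side as printed is a typographical carry-over from the earlier corollary and should read $\left\vert \frac{(b-x)f(b)+(x-a)f(a)}{b-a}-\frac{1}{b-a}\int_{a}^{b}f(u)\,du\right\vert$, the quantity actually bounded in Theorems \ref{cett} and \ref{yil}.
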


\end{document}